\documentclass[12pt]{article}
\usepackage{amsmath,amssymb,amsthm}
\usepackage{enumerate}
\usepackage{graphicx}
\usepackage[T1]{fontenc}
\usepackage{caption}
\usepackage{hyperref}
\textheight 8.7 true in \textwidth 6.4 true in \hoffset -1.5 true cm
\voffset -0.8 true cm

\usepackage{cite}

\def\draw #1 by #2 (#3){
	\vbox to #2{
		\hrule width #1 height 0pt depth 0pt
		\vfill
		\special{picture #3} 
	}
}

\def\scaleddraw #1 by #2 (#3 scaled #4){{
		\dimen0=#1 \dimen1=#2
		\divide\dimen0 by 1000 \multiply\dimen0 by #4
		\divide\dimen1 by 1000 \multiply\dimen1 by #4
		\draw \dimen0 by \dimen1 (#3 scaled #4)}
}

\newtheorem{theorem}{Theorem}[section]
\newtheorem{example}[theorem]{Example}
\newtheorem{problem}[theorem]{Problem}
\newtheorem{defin}[theorem]{Definition}

\newtheorem{corollary}[theorem]{Corollary}
\newtheorem{remark}[theorem]{Remark}
\usepackage{xcolor}
\usepackage{float}
\usepackage{graphicx,subcaption,lipsum}
\usepackage{hyperref}
\newtheorem{nt}{Note}

\setlength{\unitlength}{12pt}




\newcommand{\singlespacing}{\let\CS=\@currsize\renewcommand{\baselinestretch}{1}\tiny\CS}
\newcommand{\oneandahalfspacing}{\let\CS=\@currsize\renewcommand{\baselinestretch}{1.25}\tiny\CS}
\newcommand{\doublespacing}{\let\CS=\@currsize\renewcommand{\baselinestretch}{1.35}\tiny\CS}

\newtheorem{proposition}[theorem]{Proposition}

\newtheorem{definition}{Definition}

\newtheorem{rule-def}[theorem]{Rule}


\begin{document}
	\baselineskip 16pt
	\newcommand{\la}{\lambda}
	\newcommand{\si}{\sigma}
	\newcommand{\ol}{1-\lambda}
	\newcommand{\be}{\begin{equation}}
		\newcommand{\ee}{\end{equation}}
	\newcommand{\bea}{\begin{eqnarray}}
		\newcommand{\eea}{\end{eqnarray}}

	\baselineskip=0.30in
	\begin{center}
		{\Large  \textbf{Secure coalitions in graphs} }\\
		\vspace*{0.3cm} 
	\end{center}
    \begin{center}
		Swathi Shetty$^{1}$, Sayinath Udupa N. V.$^{*,2}$, B. R. Rakshith$^{3}$\\
		Manipal Institute of Technology\\ Manipal Academy of Higher Education\\ Manipal, India. \\
		swathi.dscmpl2022@learner.manipal.edu$^{1}$\\
        sayinath.udupa@manipal.edu	$^{*,2}$ (Corresponding author)\\
		ranmsc08@yahoo.co.in; rakshith.br@manipal.edu$^{3}$.
	\end{center}
    \begin{abstract}
    A secure coalition in a graph $G$ consists of two disjoint vertex sets $V_1$ and $V_2$, neither of which is a secure dominating set, but whose union $V_1 \cup V_2$ forms a secure dominating set. A secure coalition partition ($sec$-partition) of $G$ is a vertex partition $\pi = \{V_1, V_2, \dots, V_k\}$ where each set $V_i$ is either a secure dominating set consisting of a single vertex of degree $n-1$, or a set that is not a secure dominating set but forms a secure coalition with some other set $V_j \in \pi$. The maximum cardinality of a secure coalition partition of $G$ is called the secure coalition number of $G$, denoted $SEC(G)$. For every $sec$-partition $\pi$ of a graph $G$, we associate a graph called the secure coalition graph of $G$ with respect to $\pi$, denoted $SCG(G,\pi)$, where the vertices of $SCG(G,\pi)$ correspond to the sets $V_1, V_2, \dots, V_k$ of $\pi$, and two vertices are adjacent in $SCG(G,\pi)$ if and only if their corresponding sets in $\pi$ form a secure coalition in $G$. In this study, we prove that every graph admits a $sec$-partition. Further, we characterize the graphs $G$ with $SEC(G) \in \{1,2,n\}$ and all trees $T$ with $SEC(T) = n-1$. Finally, we show that every graph $G$ without isolated vertices is a secure coalition graph.
\end{abstract}
    \textbf{Mathematics subject classification:} 05A18, 05C69, 05C05, 05C35.\\
\textbf{Keywords:} Dominating set, secure dominating set, secure coalition, secure coalition graph.
  \section{Introduction}
    Let $G=(V, E)$ be a simple graph with a non-empty set of vertices $V$ and the edge set $E$, of order $n$ and size $m$. If the vertex $u$ is adjacent to $v$, we write $u\sim v$. If not, then $u\not\sim v$. The set of vertices that are adjacent to the vertex $v$ is called the open neighborhood of $v$, denoted as $N(v)$, while its closed neighborhood is the set $N[v]=N(v)\cup \left\{v\right\}$. The degree of vertex $v$, represented by $d(v)=|N(v)|$. We use $\delta(G)[\Delta(G)]$ to denote the minimum [maximum] degree of $G$. Throughout this paper, a vertex of degree $1$ is called a pendant vertex, and its unique neighbor is called a support vertex. A vertex of degree $n-1$ is called a full vertex. For a subset $U\subseteq V(G)$, the subgraph induced by $U$ is denoted by $G[U]$.\par
    A $\mathcal{P}$-coalition partition~\cite{haynes2023coalition} is a vertex partition $\pi = \left\{V_1, V_2, \dots, V_k\right\}$ such that no set $V_i$ is a $\mathcal{P}$-set, but for every $V_i$ there exists a $V_j$ such that $V_i \cup V_j$ is a $\mathcal{P}$-set.
     A set $D \subseteq V(G)$ is a \textit{dominating set} of a graph $G$ if every vertex in $V(G) \backslash D$ is adjacent to at least one vertex in $D$.  For more details refer~\cite{haynes2013fundamentals,haynes2020topics}.
    The concept of coalitions in graphs was introduced by Haynes et.al. in 2020~\cite{haynes2020introduction} by considering the property $\mathcal{P}$ as domination.
     A \textit{coalition partition} ($c$-partition), in a graph $G$ is a vertex partition $\pi=\left\{V_1, V_2,\dots, V_k\right\}$ such that every set $V_i$ of $\pi$ is either a singleton dominating set, or is not a dominating set but forms a coalition with another set $V_j$ in $\pi$.
    The \textit{coalition number} $C(G)$ equals the maximum order $k$ of a $c$-partition of $G$, and a $c$-partition of $G$ having order $C(G)$ is called a \textit{$C(G)$-partition}. For a graph $G$ with vertex set $V=\left\{v_1,v_2,\dots,v_n\right\}$, the partition $\pi_1=\left\{V_1,V_2,\dots,V_n\right\}$, where $1\le i\le n$ known as the \textit{singleton partition}.\par
    In~\cite{haynes2020introduction}, the authors established upper and lower bounds for the coalition number and determined exact values for paths and cycles. This line of investigation was extended in~\cite{haynes2021upper}, where additional upper bounds were derived in terms of the minimum and maximum degree of the graph. The notion of coalition graphs was formally introduced in~\cite{haynes2023coalition}, with the key result that every graph can be viewed as a coalition graph. Building upon this,~\cite{haynes2023coalition1} offered a complete characterization of coalition graphs for trees, paths, and cycles.  In a related direction, Bakhshesh et al.~\cite{bakhshesh2023coalition} characterized graphs $G$ of order $n$ with $\delta(G) = 0$ and $\delta(G) = 1$ for which $C(G) = n$, and also characterized trees $T$ satisfying $C(T) = n - 1$. In~\cite{haynes2023self}, the authors provided a complete characterization of self-coalition graphs. The Different variations of coalition introduced and discussed in~\cite{alikhani2024total,alikhani2025independent,henning2025double,samadzadeh2025paired,bakhshesh2025minmin}.
    \section{Secure coalition in graphs}
    A \textit{secure dominating set} $S$ of a graph $G$ is a dominating set with the property that each vertex $u\in V\backslash S$ is adjacent to a vertex $v\in S$ such that $\left(S\backslash\left\{v\right\}\right)\cup \left\{u\right\}$ is a dominating set. The minimum cardinality of a secure dominating set is called the secure domination number, denoted by $\gamma_s(G)$~\cite{cockayne2005protection}. For the $\mathcal{P}$-coalitions in this paper, we consider the property of being a secure dominating set. We first define a secure coalition of a graph $G$.
\begin{definition}
A \textit{secure-coalition} in a graph $G$ consists of two disjoint sets of vertices $V_1$ and $V_2$, neither of which is a secure dominating set, but whose union $V_1 \cup V_2$ is a secure dominating set. We say that the sets $V_1$ and $V_2$ form a secure-coalition.\end{definition}
    \begin{definition}A \textit{secure coalition partition} ($sec$-partition) of $G$ is a vertex partition $\pi=\left\{V_1, V_2,\dots, V_k\right\}$ such that every set $V_i$ of $\pi$ is either a secure dominating set consisting of a single vertex of degree $n-1$, or is not a secure dominating set but it forms a secure coalition with another set $V_j\in \Pi$. The maximum cardinality of a secure coalition partition of $G$ is called \textit{secure coalition number} of $G$, denoted by $SEC(G)$.
    \end{definition}
    \begin{remark} A coalition partition of a graph $G$ need not be a secure coalition partition. 
    However, every secure coalition partition is a coalition partition provided no vertex subset in the partition is a dominating set. For instance, in the path $P_6=(v_1,v_2,v_3,v_4,v_5,v_6)$, 
    the partition $\pi_1=\{\{v_1,v_6\},\{v_2\},\{v_3\},\{v_4\},\{v_5\}\}$  is a coalition partition but not a secure one, since $\{v_2\}\cup\{v_5\}$ is dominating but not securely dominating. Conversely, a secure coalition partition $\pi_2=\left\{\left\{v_2,v_5\right\},\left\{v_1,v_6\right\},\left\{v_4\right\},\left\{v_3\right\}\right\}$ of $P_6$ 
    fails to be a coalition partition since $\{v_2,v_5\}$ is dominating. Thus, for any graph $G$, $SEC(G) \le C(G)$,
    where $SEC(G)$ and $C(G)$ denote the secure coalition number and coalition number of $G$, respectively.
\end{remark}
   The following theorem establishes the existence of a secure coalition partition for every graph.
 \begin{theorem}\label{mindelta2}
 Every graph $G$ has a secure coalition partition.
    \end{theorem}
    \begin{proof}
    Let $G \not\cong K_n$ be a graph with no isolated vertices. Choose a vertex $v_1 \in V(G)$ such that $d(v_1)=\delta(G)$ and let $N(v_1)=\left\{v_2,\ldots,v_{\delta+1}\right\}$. Since $G\not\cong K_n$, it follows that $V(G)\setminus N[v_1]\neq \varnothing$.
Define $W=V(G)\setminus N[v_1]$. For each integer $i$ with $1\le i\le \delta+1$, let $V_i=\left\{v_i\right\}$. Then $\pi=\left\{V_1,V_2,\ldots,V_{\delta+1},W\right\}$ is a partition of $V(G)$.
We now show that $\pi$ is an $sec$-partition.\par
Since $G\not\cong K_n$ and each $V_i$ ($1\le i\le \delta+1$) is a singleton set, none of the sets $V_i$ is a secure dominating set of $G$. Moreover, since $N[v_1]\cap W=\varnothing$, the set $W$ does not dominate $v_1$, and hence $W$ is not a secure dominating set. We now show that, for each $i$ with $1\le i\le \delta+1$, the set $S=V_i\cup W$ is a secure dominating set of $G$.\\
\textbf{Case 1.} $S=V_i\cup W$, where $2\le i\le \delta+1$.\\
\textbf{Subcase 1.1.} Every vertex $v_i$, $i\neq 1$, has a neighbor in $W$.\\
Since each vertex $v_i$, $i\neq 1$, has a neighbor in $W$, every such vertex is dominated by a vertex of $W$. Moreover, the vertex $v_1$ is dominated by $v_i$. Hence, $S$ is a dominating set of $G$.\\
To show that $S$ is secure dominating set, let $x\in V(G)\setminus S$. Then $x\in N[v_1]\setminus\{v_i\}$.\\ If $x=v_1$, consider the set $(S\setminus\{v_i\})\cup\{v_1\}$.
In this set, every vertex of $N[v_1]$ is dominated by $v_1$, while remaining vertices dominated by vertices of $W$. Therefore, $(S\setminus\{v_i\})\cup\{v_1\}$ is a dominating set. Now suppose that $x\in N(v_1)\setminus\{v_i\}$. Then consider the set
$(S\setminus\{v_i\})\cup\{x\}$. Since every vertex in $N(v_1)$ has a neighbor in $W$, all vertices of $N(v_1)$ remain dominated by vertices of $W$. Furthermore, $v_1$ is dominated by $x$. Thus,
$(S\setminus\{v_i\})\cup\{x\}$ is a dominating set. Therefore,  $S$ is a secure dominating set of $G$.\\[2mm]
\textbf{Subcase 1.2.} None of the vertices $v_i$, $i\neq 1$, has a neighbor in $W$.\\
This subcase applies only to graphs with no full vertex. Since
$d(v_i)\ge d(v_1)=\delta(G)$ for each $i\neq 1$, and none of the vertices $v_i$ is adjacent to any vertex in $W$, it follows that every $v_i$ must be adjacent to all vertices in $N(v_1)$. Consequently, each $v_i$ dominates every vertex in $N[v_1]$. Since the vertices in $W$ dominate themselves, $S=V_i\cup W$ is a dominating set of $G$.\\
Now let $x\in V(G)\setminus S$. Then $x\in N[v_1]\setminus\{v_i\}$. Since every vertex in $N[v_1]$ dominates all vertices of $N[v_1]$, the set $(S\setminus\{v_i\})\cup\{x\}$
remains a dominating set of $G$. Therefore, $S$ is a secure dominating set of $G$.\\[2mm]
\textbf{Subcase 1.3.} There exists at least one vertex $v_i$ such that $N(v_i)\cap W\neq\emptyset$, and at least one vertex $v_j$ such that $N(v_j)\cap W=\emptyset$.\\
Combining the arguments used in Subcases 1.1 and 1.2, it follows that \(S=V_i\cup W\) is a dominating set of $G$. Moreover, for every vertex $x\in V(G)\setminus S$, the set
$(S\setminus\{v_i\})\cup\{x\}$ is also a dominating set of $G$. Consequently, $S$ is a secure dominating set of $G$.\\[2mm]
\textbf{Case 2.} $S=\{v_1\}\cup W$.\\
Since $v_1$ dominates every vertex in $N[v_1]$, and each vertex in
$V(G)\setminus N[v_1]=W$ is dominated by itself, it follows that $S$ is a dominating set of $G$.\\
To show that $S$ is secure dominating set, let $x\in V(G)\setminus S$. Then $x\in N(v_1)$. Consider the set
$(S\setminus\{v_1\})\cup\{x\}$. The vertex $v_1$ is dominated by $x$, and every vertex in $N(v_1)$ is dominated either by $x$ or by a vertex of $W$. Moreover, each vertex in $W$ dominated by itself. Therefore,
$(S\setminus\{v_1\})\cup\{x\}$ is a dominating set of $G$ for every $x\in N(v_1)$. Therefore, $S$ is a secure dominating set of $G$.\\
By Cases 1 and 2, it follows that for each $i$, $1\le i\le \delta+1$, the set $S=V_i\cup W$ is a secure dominating set of $G$.\par
Let $G\ncong \overline{K_n}$ be a graph with a nonempty set $I$ of isolated vertices, and let $G'=G[V(G)\setminus I]$. Let $y\in V(G')$ be a vertex with $d_{G'}(y)=\delta(G')$, and let
$N_{G'}(y)=\{y_1,y_2,\ldots,y_l\}$, where $l=\delta(G')$. Define $V_i=\{y_i\}$, $1\le i\le l$, and $V_{l+1}=\{y\}$. Further, let $W=\bigl(V(G')\setminus N_{G'}[y]\bigr)\cup I$.
Then $\pi=\{V_1,V_2,\ldots,V_l,V_{l+1},W\}$
is a partition of $V(G)$.\\[2mm]
Since $I\neq\varnothing$, no singleton set $V_i$ dominates $G$, and hence none of the sets $V_i$, $1\le i\le l+1$, is a secure dominating set. Moreover, the set $W$ does not dominate the vertex $y$, as $N_{G'}[y]\cap W=\varnothing$.
Therefore, no set in $\pi$ is a secure dominating set of $G$.\\
By arguments analogous to those used in Cases~1 and~2 above, for each $i$, $1\le i\le l+1$, the set $V_i\cup W$
is a secure dominating set of $G$. Thus, each singleton set $V_i$ is a secure coalition partner of $W$. Consequently, $\pi$ is a $sec$-partition of $G$. 
    \end{proof}
    The following result follows immediately from the proof of Theorem~\ref{mindelta2}.
\begin{corollary}
For any graph $G$ of order $n$,
$1 \le SEC(G) \le n$.
The lower bound is attained by $K_1$, while the upper bound is attained by $K_n$.
\end{corollary}
\begin{theorem}\label{theodelta}
\begin{enumerate}
\item If $G\ncong K_n$ is a graph with no isolated vertices, then
$SEC(G)\ge \delta(G)+2$.
\item Let $G\ncong \overline{K_n}$ be a graph with a nonempty set $I$ of isolated vertices, and let $G'=G[V(G)\setminus I]$.
Then $SEC(G)\ge \delta(G')+2$.
\end{enumerate}
\end{theorem}
\begin{proof}
The proof follows directly from the constructions given in the proof of Theorem~\ref{mindelta2}.
\begin{enumerate}
\item The partition $\pi=\left\{V_1,V_2,\ldots,V_{\delta+1},W\right\}$
constructed in  Theorem~\ref{mindelta2} is a secure coalition partition of $G$. Therefore, $SEC(G)\ge |\pi| =(\delta(G)+1)+1 =\delta(G)+2.$
\item The partition $\pi=\left\{V_1,V_2,\ldots,V_l,V_{l+1},W\right\}$, where $l=\delta(G')$, constructed for graphs with isolated vertices in the proof of Theorem~\ref{mindelta2}, is a secure coalition partition of $G$. Hence, $SEC(G)\ge |\pi|=l+2 =\delta(G')+2$.
\end{enumerate}
\end{proof}
\begin{proposition}\label{knoverline}
If $G\not\cong K_n$ and $G\not\cong \overline{K_n}$, then
$SEC(G)\ge 3$.
\end{proposition}
\begin{proof}
If $G$ has no isolated vertices, then $\delta(G)\ge 1$. Since $G\not\cong K_n$, Theorem~\ref{theodelta}(1) yields
$SEC(G)\ge \delta(G)+2\ge 3$. Now suppose that  $G\not\cong \overline{K_n}$ and has at least one isolated vertex. Then the graph $G'=G[V(G)\setminus I]$
is nonempty and has no isolated vertices. Consequently,
$\delta(G')\ge 1$. By Theorem~\ref{theodelta}(2),
$SEC(G)\ge \delta(G')+2\ge 3$. Therefore,  $SEC(G)\ge 3$.
\end{proof}
\begin{theorem}\label{12cor}
Let $G$ be a graph of order $n$. Then:
\begin{enumerate}
\item $SEC(G)=1$ if and only if $G\cong K_1$.
\item $SEC(G)=2$ if and only if $G\cong K_2$ or $G\cong \overline{K_n}$ for $n\ge 2$.
\end{enumerate}
\end{theorem}
\begin{proof}
By Proposition~\ref{knoverline}, if
$G\not\cong K_n$ and $G\not\cong \overline{K_n}$,
then $SEC(G)\ge 3$. Therefore, any graph $G$ satisfying $SEC(G)\le 2$ must be either a complete graph or totally disconnected graph. Since $SEC(K_n)=n$, it follows that $SEC(K_n)=1$ if and only if $n=1$. Similarly, $SEC(K_n)=2$ if and only if $n=2$. Moreover, for every $n\ge 2$, the graph $\overline{K_n}$ admits the secure coalition partition
$\bigl\{\{v_1\},\,V(\overline{K_n})\setminus\{v_1\}\bigr\}$,
and therefore $SEC(\overline{K_n})=2$.
Consequently, $SEC(G)=2$ if and only if $G\cong K_2$ or $G\cong \overline{K_n}$ for $n\ge 2$.
\end{proof}
\begin{theorem}
Let $G$ be a graph of order $n$, with maximum degree $\Delta(G)$ and domination number $\gamma(G)$. If $\pi$ is a $sec$-partition of $G$ and $Z\in \pi$, then the number of sets in $\pi$ that form a secure coalition with $Z$ is at most $\max\left\{\Delta(G)+1, n-\gamma(G)\right\}$.
\end{theorem}
\begin{proof}
Let $Z\in \pi$. Suppose that $Z$ is not a dominating set of $G$. Then there exists a vertex $x\in V(G)$ that is not dominated by the vertices of $Z$. If a set $Y\in \pi$ forms a secure coalition with $Z$, then $Y\cup Z$ must be a secure dominating set of $G$. In particular, $Y\cup Z$ must dominate $x$. Since $Z$ does not dominate $x$, the set $Y$ must contain a vertex of $N[x]$. Therefore, there can be at most $|N[x]|=d(x)+1\le \Delta(G)+1$ sets of $\pi$ that form a secure coalition with $Z$.\par
Now suppose that $Z$ is a dominating set of $G$. Then
$|Z|\ge \gamma(G)$.
Therefore, $Z$ can form a secure coalition with at most $n-\gamma(G)$ sets of $\pi$.
Thus, the number of sets in $\pi$ that form a secure coalition with $Z$ is at most $\max\left\{\Delta(G)+1, n-\gamma(G)\right\}$.
\end{proof}
\begin{theorem}
Let $G$ be a graph of order $n$ with secure domination number $\gamma_s(G)$. Then $SEC(G)\le n-\gamma_s(G)+2$.
Moreover, the bound is sharp.
\end{theorem}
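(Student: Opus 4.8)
The plan is to exploit the defining property that a secure coalition $V_i\cup V_j$ is itself a secure dominating set, so its cardinality is at least $\gamma_s(G)$, while every remaining block of the partition contributes at least one vertex. First I would fix a $sec$-partition $\pi=\{V_1,\dots,V_k\}$ realizing $k=SEC(G)$ and record that, since $\pi$ partitions $V(G)$, we have $\sum_{i=1}^{k}|V_i|=n$.

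The core case is when $\pi$ contains at least one secure coalition, say between $V_1$ and $V_2$. Then $V_1\cup V_2$ is a secure dominating set, and since $V_1,V_2$ are disjoint, $|V_1|+|V_2|=|V_1\cup V_2|\ge\gamma_s(G)$. Each of the other $k-2$ blocks is nonempty, so $|V_i|\ge 1$ for $3\le i\le k$. Summing these contributions yields $n\ge \gamma_s(G)+(k-2)$, that is, $k\le n-\gamma_s(G)+2$, which is precisely the asserted bound.

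I would then dispose of the degenerate case in which $\pi$ contains no secure coalition at all. By the definition of a $sec$-partition, every block must then be a single vertex of degree $n-1$ that is itself a secure dominating set; a short check shows that a singleton $\{v\}$ is secure dominating only when every other vertex also has degree $n-1$, forcing $G\cong K_n$. In that case $k=n$ and $\gamma_s(G)=1$, so the bound $n\le n-1+2$ holds trivially. I expect this to be the only fiddly point, since one must argue that the total absence of a coalition partner collapses the partition to the complete-graph situation rather than contradicting the definition.

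For sharpness I would exhibit a concrete extremal graph. A convenient choice is $C_4$: since $C_4\ncong K_4$ has no isolated vertices and $\delta(C_4)=2$, Theorem~\ref{mindelta2} gives $SEC(C_4)\ge 4$, hence $SEC(C_4)=4$; and $\gamma_s(C_4)=2$, so $n-\gamma_s(C_4)+2=4-2+2=4$. (The empty graph $\overline{K_n}$ works equally well, as $SEC(\overline{K_n})=2$ while $\gamma_s(\overline{K_n})=n$.) Tracing back through the inequality also reveals the extremal structure: equality forces a single coalition pair whose union has size exactly $\gamma_s(G)$, with every remaining block a singleton.
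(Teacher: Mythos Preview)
Your proposal is correct and follows essentially the same approach as the paper: pick an optimal $sec$-partition, locate a coalition pair $V_1,V_2$ with $|V_1|+|V_2|\ge\gamma_s(G)$, and bound the remaining blocks below by $1$ each. The paper disposes of the boundary situation by noting that the bound is trivial when $\gamma_s(G)\le 2$ (since then $n-\gamma_s(G)+2\ge n$), whereas you instead argue directly that the absence of any coalition forces $G\cong K_n$; both are fine, and your sharpness witness $C_4$ is valid in place of the paper's $P_3$.
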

\begin{proof}
If $\gamma_s(G)\le 2$, then $SEC(G)\le n\le n-\gamma_s(G)+2$,
and the result follows.
Now suppose that $\gamma_s(G)\ge 3$. Let $r=SEC(G)$,
and let $\pi=\left\{V_1,V_2,\ldots,V_r\right\}$ be a $sec$-partition of $G$. Since $\pi$ is a partition of $V(G)$, $\sum_{i=1}^{r}|V_i|=n$. Without loss of generality, assume that $V_1$ and $V_2$ form a secure coalition. Then $V_1\cup V_2$ is a secure dominating set of $G$. Hence, $|V_1|+|V_2|=|V_1\cup V_2| \ge \gamma_s(G)$ and $|V_i|\ge 1$, $3\le i\le r$. Therefore, $n=\sum_{i=1}^{r}|V_i| \ge \gamma_s(G)+(r-2)$.
Thus, $SEC(G)\le n-\gamma_s(G)+2$.
\end{proof}
The bound is sharp. For the path $P_3$, we have
$\gamma_s(P_3)=2$ and $SEC(P_3)=3$.
Hence, $SEC(P_3)=3=3-2+2=n-\gamma_s(P_3)+2$, showing that equality holds.
\section{Graphs with $SEC(G)=n$}

If $G\cong K_n$, then $SEC(G)=n$.

We now define four families of graphs, denoted by $\mathcal{F}_1$, $\mathcal{F}_2$, $\mathcal{F}_3$, and $\mathcal{F}_4$.

Let $G$ be a graph of order $n$, and let $v\in V(G)$ be a vertex of minimum degree $\delta(G)$. Define $P=N(v)$ and $Q=V(G)\setminus N[v]$. Then $|P|=\delta(G)$ and $|Q|=n-\delta(G)-1$.

Throughout, we assume that $G[Q]$ is a complete graph. Based on the structure of $G[P]$ and the interconnections between the sets $P$ and $Q$, we define the following families of graphs.
 \begin{figure}[H]
  ~~~~~~~~~~~ \includegraphics[width=0.7\textwidth]{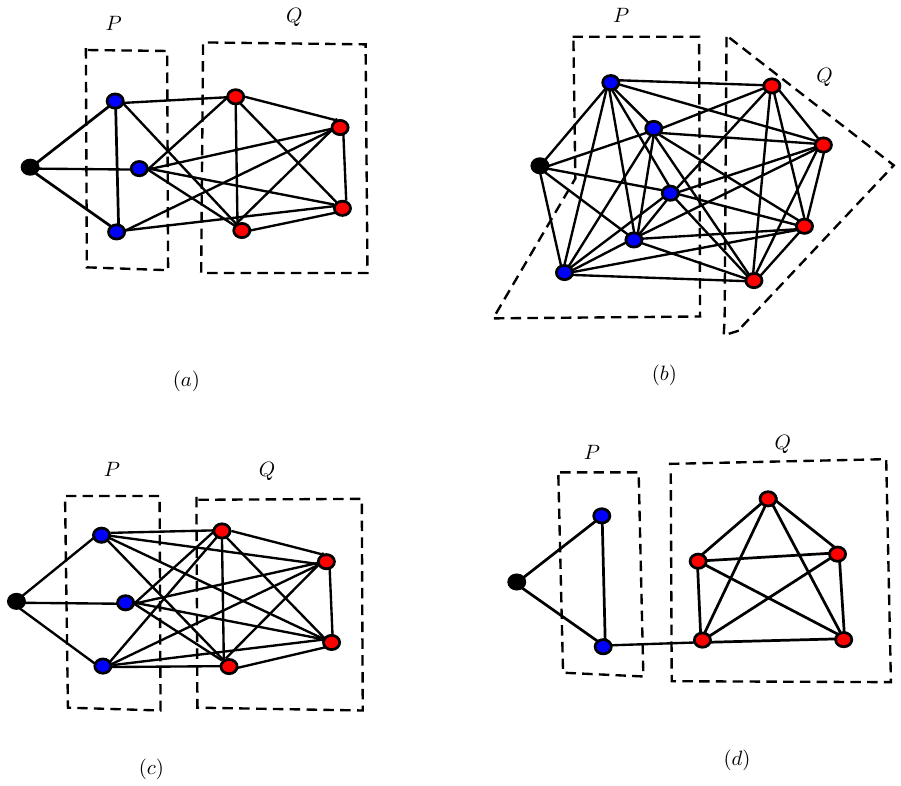}
    \caption{Examples for the graphs in the family $\mathcal{F}_1$, $\mathcal{F}_2$, $\mathcal{F}_3$ and $\mathcal{F}_4$.}
    \label{pq}
    \end{figure}
    \begin{itemize}
\item \textbf{Family $\mathcal{F}_1$.}
The graph $G[P]$ is not complete. If a vertex $x\in P$ is nonadjacent to at least one vertex of $Q$, then every vertex of $P$ that is nonadjacent to $x$ is adjacent to every vertex of $Q$. Furthermore, no vertex of $P$ is nonadjacent to two mutually nonadjacent vertices of $P$. Arbitrary edges may be added between $P$ and $Q$, including the possibility of adding none, provided that these conditions and the minimum-degree condition are preserved. See Figure~\ref{pq}(a).
\item\textbf{Family $\mathcal{F}_2$.}
The graph $G[P]$ is not complete. Every vertex of $P$ that is nonadjacent to at least one vertex of $Q$ is adjacent to every other vertex of $P$. Additional edges may be added between nonadjacent pairs of vertices in $P$, including the possibility of adding none, provided that $G[P]$ remains non-complete. See Figure~\ref{pq}(b).
\item \textbf{Family $\mathcal{F}_3$.}
The graph $G[P]$ is not complete, and every vertex of $P$ is adjacent to every vertex of $Q$. See Figure~\ref{pq}(c).
\item \textbf{Family $\mathcal{F}_4$.}
The graph $G[P]$ is complete, and at least one vertex of $P$ is adjacent to at least one vertex of $Q$. See Figure~\ref{pq}(d).
\end{itemize}
 \begin{theorem}\label{nsecnumber}
    For any connected graph $G$, $SEC(G)=n$ if and only if $G\in\left\{\mathcal{F}_1,\mathcal{F}_2,\mathcal{F}_3,\mathcal{F}_4\right\}$.
    \end{theorem}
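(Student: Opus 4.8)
The plan is to reduce the statement to a purely structural condition and then match it against the four families. First I would observe that $SEC(G)=n$ holds if and only if the singleton partition $\pi_1=\{\{v_1\},\dots,\{v_n\}\}$ is a $sec$-partition. When $G\cong K_n$ every singleton is itself a secure dominating set and $SEC(K_n)=n$ as already noted, so $K_n$ is the degenerate case. For $G\ncong K_n$ no singleton can be a secure dominating set (a vertex $w$ of degree $n-1$ fails the security condition unless every neighbour also has degree $n-1$), so $\pi_1$ is a $sec$-partition exactly when every vertex $w$ lies in a two-element secure dominating set $\{w,u\}$. Thus the theorem becomes: \emph{for connected $G\ncong K_n$, every vertex belongs to a secure dominating set of size two if and only if $G\in\{\mathcal F_1,\mathcal F_2,\mathcal F_3,\mathcal F_4\}$.}

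Next I fix a vertex $v$ of minimum degree and set $P=N(v)$, $Q=V(G)\setminus N[v]$, as in the definition of the families. The first key step is to prove that $G[Q]$ is complete, which is the cleanest part. Suppose $q_1,q_2\in Q$ are non-adjacent, and let $\{q_1,x\}$ be any secure dominating set containing $q_1$ (one exists by the reduction). Since $q_2\notin N[q_1]$, domination forces $q_2\in N[x]$, whence $x\neq v$ because $q_2\notin N[v]$. If $x=q_2$ the pair is $\{q_1,q_2\}$, which does not dominate $v$; contradiction. If $x\neq q_2$, then $x$ is the only member of the pair adjacent to $q_2$, so attacking $q_2$ forces the defender to move $x$ to $q_2$, producing $\{q_1,q_2\}$, which again fails to dominate $v$ since $v\notin N[q_1]\cup N[q_2]$; contradiction. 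Hence any two vertices of $Q$ are adjacent, so $G[Q]$ is complete, matching the common hypothesis of all four families.

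With $G[Q]$ complete, the remaining work is a structural analysis of $P$ and of the edges between $P$ and $Q$. The engine is an explicit criterion for when a pair is a secure dominating set: for example $\{v,q\}$ with $q\in Q$ is always dominating, attacks on $Q$ are always defensible by sliding $q$ along the clique $G[Q]$, and the only genuine constraints come from attacks on $p\in P$, which are defensible iff $P\subseteq N[p]\cup N[q]$ or ($p\sim q$ and $Q\subseteq N[p]$). I would record the analogous criteria for the pairs $\{v,p\}$, $\{p,p'\}$ and $\{p,q\}$, and then impose that \emph{each} of $v$, every $p\in P$, and every $q\in Q$ lies in at least one such pair. Requiring $v$ to have a partner forces a vertex dominating all of $Q$; requiring each $p\in P$ to have a partner forces, for those $p$ not joined to all of $Q$, strong internal adjacency inside $P$. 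Splitting on whether $G[P]$ is complete then yields the four cases: $G[P]$ complete with at least one $P$--$Q$ edge gives $\mathcal F_4$, while $G[P]$ incomplete gives $\mathcal F_1$, $\mathcal F_2$, or $\mathcal F_3$ according to whether $P$ decomposes into two cliques with one fully joined to $Q$, the vertices not fully joined to $Q$ are adjacent to all of $P$, or $P$ and $Q$ are completely joined. I expect this necessity argument --- showing the constraints collapse \emph{exactly} to these four configurations, with no gaps or overlaps and independently of the choice of minimum-degree $v$ --- to be the main obstacle.

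For the converse I would verify, family by family, that every vertex has a secure dominating partner, so that $\pi_1$ is a $sec$-partition and $SEC(G)=n$. Using that $G[Q]$ is complete, the natural partners are $\{v,q\}$, or $\{v,p^{\ast}\}$ for a vertex $p^{\ast}$ joined to all of $Q$, which simultaneously cover $v$, the vertices of $Q$, and the vertices of $\beta_1$ in $\mathcal F_1$ (or all of $P$ in $\mathcal F_3$); the remaining vertices of $P$ are then covered by carefully chosen pairs inside $P$, exploiting the clique structure of $\beta_1,\beta_2$ in $\mathcal F_1$ or the full-$P$-adjacency in $\mathcal F_2$. Each verification reduces to checking the pair criteria from the previous paragraph, so the only labour is the routine but lengthy enumeration of attacked-vertex/defender-move subcases; the security of attacks landing in $Q$ is always free because $G[Q]$ is complete, which keeps the bookkeeping manageable.
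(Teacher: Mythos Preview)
Your proposal is correct and follows essentially the same route as the paper: reduce $SEC(G)=n$ to the statement that every vertex lies in a two-element secure dominating set, prove $G[Q]$ is complete, and then case-split on whether $G[P]$ is complete and on the $P$--$Q$ edge pattern to land in $\mathcal F_1$--$\mathcal F_4$. The only noteworthy difference is how the completeness of $G[Q]$ is obtained: the paper argues via a secure-coalition partner of the minimum-degree vertex $v$ (if $\{v,v_1\}$ is secure then swapping at any $v_j\in Q$ yields $\{v,v_j\}$ dominating, forcing $v_j$ adjacent to all of $Q$), whereas you argue via a partner of an arbitrary $q_1\in Q$; both work, and your version is arguably cleaner. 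Your explicit pair criteria for $\{v,q\}$, $\{v,p\}$, $\{p,p'\}$, $\{p,q\}$ are a helpful organizational device that the paper does not isolate, but the subsequent case analysis and the converse verification proceed along the same lines in both treatments.
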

    \begin{proof}
Let $G$ be a graph of order $n$ with $SEC(G)=n$. Then
$\pi=\{\{v\},\{v_1\},\ldots,\{v_{n-1}\}\}$
is a $sec$-partition of $G$. Let $v$ be a vertex of minimum degree, that is $d(v)=\delta(G)$. Define $P=N(v)=\{v_1,v_2,\ldots,v_\delta\}$ and
$Q=V(G)\setminus N[v]=\{v_{\delta+1},\ldots,v_{n-1}\}$.

Since $v$ is not adjacent to any vertex of $Q$, the singleton set $\{v\}$ is not a dominating set. As $\pi$ is a $sec$-partition, $\{v\}$ must form a secure coalition with some singleton set. Suppose that $\{v\}$ forms a secure coalition with $\{v_1\}$. Then $\{v,v_1\}$ is a secure dominating set of $G$.

Let $v_j\in Q$. Since $\{v,v_1\}$ is a secure dominating set, $v_1$ must dominate $v_j$. Hence, $(\{v,v_1\}\setminus\{v_1\})\cup\{v_j\}=\{v,v_j\}$ is a dominating set of $G$. Since $v$ is not adjacent to any vertex of $Q$, it follows that $v_j$ must dominate every vertex of $Q$. As $v_j$ was chosen arbitrarily, every vertex of $Q$ is adjacent to every other vertex of $Q$.\\
Similarly, if $\left\{v\right\}$ forms a secure coalition with any singleton set corresponding to the vertices of $Q$, then to form a secure coalition, all vertices of $Q$ must be adjacent to each other. Therefore, $G[Q]$ is a complete subgraph.  We now consider the following cases.\\[2mm]
 \textbf{Case 1.} $G[P]$ is a complete subgraph.\\
Since $G$ is connected and $Q\neq \varnothing$, at least one vertex of $P$ is adjacent to a vertex of $Q$. As $G[P]$ is a complete subgraph, every vertex of $P$ is adjacent to every other vertex of $P$. Moreover, $G[Q]$ is a complete subgraph. Therefore, for every $V_i=\{v_i\}$, $1\le i\le \delta$, and every $V_j=\{v_j\}$, $\delta+1\le j\le n-1$, the set $V_i\cup V_j$ is a secure dominating set of $G$. Hence, each singleton set corresponding to a vertex of $P$ forms a secure coalition with every singleton set corresponding to a vertex of $Q$.

Consequently, every singleton set in $\pi$ has a secure coalition partner, and hence $\pi$ is a secure coalition partition of $G$. Therefore, $SEC(G)=n$, and thus $G\in\mathcal{F}_4$.\\[2mm]
\textbf{Case 2.} $G[P]$ is not a complete subgraph, and every vertex of $P$ is adjacent to every vertex of $Q$.\\
In this case, for every $V_i=\{v_i\}$, $1\le i\le \delta$, and every $V_j=\{v_j\}$, $\delta+1\le j\le n-1$, the set $V_i\cup V_j$ is a secure dominating set of $G$. Hence, each singleton set corresponding to a vertex of $P$ forms a secure coalition with every singleton set corresponding to a vertex of $Q$.\\
Consequently, every singleton set in $\pi$ has a secure coalition partner, and hence $\pi$ is a secure coalition partition of $G$. Therefore, $SEC(G)=n$, and thus $G\in\mathcal{F}_3$.

\medskip

Before proceeding to the next case, we justify the restriction to graphs with $\delta(G)\ge 3$. Suppose that $\delta(G)=2$ and that $G[P]$ is not a complete subgraph. Let $P=\left\{v_{1},v_{2}\right\}$, where $v_{1}v_{2}\notin E(G)$. If there exists a vertex $u\in Q$ such that $v_{1}u\notin E(G)$, then the singleton set $\left\{v_2\right\}$ cannot form a secure coalition with any set in $\pi$. Indeed, neither $\left\{v,v_2\right\}$ nor $\left\{v_2,u\right\}$ can be a secure dominating set of $G$. This contradicts the assumption that $\pi$ is a $sec$-partition.

Therefore, when $\delta(G)=2$, every vertex of $P$ must be adjacent to every vertex of $Q$ whenever $G[P]$ is not a complete subgraph. Hence, either $G[P]$ is a complete subgraph or every vertex of $P$ is adjacent to every vertex of $Q$. Consequently, $G$ belongs to either $\mathcal{F}_3$ or $\mathcal{F}_4$.\\[2mm]
\textbf{Case 3.} $G[P]$ is not a complete subgraph, and at least one vertex of $P$ is nonadjacent to at least one vertex of $Q$.

We claim that if $\delta(G)\ge 3$ and  every vertex of $P$ is non-adjacent to every vertex of $Q$, then $SEC(G)=n$ only if $G\in\mathcal{F}_1\cup\mathcal{F}_2$.

We first consider the situation in which there exists a vertex of $P$ that is nonadjacent to at least one vertex of $P$ and at least one vertex of $Q$.

\medskip

\textbf{Subcase 3.1.} Suppose that $v_1\not\sim v_2$ and that $v_1$ is nonadjacent to a vertex $v_{q_1}\in Q$.

We show that $v_2$ must be adjacent to $v_{q_1}$. Suppose, to the contrary, that $v_2\not\sim v_{q_1}$. Consider the singleton set $\left\{v_{q_1}\right\}$. Since no vertex of $Q$ is adjacent to $v$, the set $\left\{v_{q_1}\right\}$ cannot form a secure coalition with any singleton set corresponding to a vertex of $Q$.

Since $\left\{v_{q_1},v_1\right\}$ does not dominate $v_2$. $\left\{v_{q_1}\right\}$ cannot form a secure coalition with $\left\{v_1\right\}$.  Similarly, $\left\{v_{q_1}\right\}$ cannot form a secure coalition with $\left\{v_2\right\}$, as $\left\{v_{q_1},v_2\right\}$ does not dominate $v_1$. Clearly, $\left\{v_{q_1}\right\}$ also cannot form a secure coalition with $\left\{v\right\}$.

Furthermore, if $w\in P$ is adjacent to either $v_1$ or $v_2$, then $\left\{v_{q_1},w\right\}$ fails to dominate at least one of the vertices $v_1$ and $v_2$. Hence, $\left\{v_{q_1}\right\}$ cannot form a secure coalition with any singleton set of $\pi$.
This contradicts the assumption that $\pi$ is a secure coalition partition. Therefore, $v_2$ must be adjacent to $v_{q_1}$.

Consequently, if a vertex $x\in P$ is nonadjacent to some vertex of $Q$, then every vertex of $P$ that is nonadjacent to $x$ must be adjacent to every vertex of $Q$.

\medskip

\textbf{Subcase 3.2.} Suppose that $v_1\not\sim v_2$, $v_1\not\sim v_3$, $v_2\not\sim v_3$, and that $v_1$ is nonadjacent to a vertex $v_{q_1}\in Q$.\\
By Subcase~3.1, both $v_2$ and $v_3$ must be adjacent to $v_{q_1}$. Now consider the singleton set $\left\{v_3\right\}$. Since no vertex of $Q$ is adjacent to $v$, the set $\left\{v_3\right\}$ cannot form a secure coalition with any singleton set corresponding to a vertex of $Q$. In particular, the set $\left\{v_3,v_{q_1}\right\}$ does not dominate $v_1$.

Further, $\left\{v_3,v_2\right\}$ does not dominate $v_1$, and $\left\{v_3,v_1\right\}$ does not dominate $v_2$. Hence, $\left\{v_3\right\}$ cannot form a secure coalition with $\left\{v_2\right\}$ or $\left\{v_1\right\}$. It also cannot form a secure coalition with $\left\{v\right\}$. Therefore, $\left\{v_3\right\}$ does not form a secure coalition with any singleton set of $\pi$, contradicting the assumption that $SEC(G)=n$.

It follows that no vertex of $P$ can be nonadjacent to two mutually nonadjacent vertices of $P$. Hence, by Subcases~3.1 and~3.2, $G\in\mathcal{F}_1$.\\[2mm]
Next, consider the case in which every vertex of $P$ that is nonadjacent to at least one vertex of $Q$ is adjacent to every other vertex of $P$.

In this situation, the singleton set $\left\{v\right\}$ forms a secure coalition with every singleton set corresponding to a vertex of $Q$, as well as with every singleton set corresponding to a vertex of $P$ that is adjacent to all vertices of $Q$. Moreover, each singleton set corresponding to a vertex of $P$ that is nonadjacent to at least one vertex of $Q$ forms a secure coalition with a singleton set corresponding to a vertex of $Q$. Therefore, every singleton set in $\pi$ has a secure coalition partner. Hence, $\pi$ is a secure coalition partition of $G$, and consequently $SEC(G)=n$. It follows that $G\in\mathcal{F}_2$.

\medskip

Conversely, suppose that $G\in\mathcal{F}_1$. By the definition of $\mathcal{F}_1$, every singleton set corresponding to a vertex of $Q$, as well as every singleton set corresponding to a vertex of $P$ that is adjacent to all vertices of $Q$, forms a secure coalition with $\left\{v\right\}$. Furthermore, every singleton set corresponding to a vertex of $P$ that is nonadjacent to at least one vertex of $P$ and at least one vertex of $Q$ forms a secure coalition with a singleton set corresponding to a vertex of $P$ that is adjacent to all vertices of $Q$. Thus, every singleton set in $\pi$ has a secure coalition partner, and hence
$SEC(G)=n$.

Similarly, if $G\in\mathcal{F}_2$, $\mathcal{F}_3$, or $\mathcal{F}_4$, then the existence of secure coalition partners for all singleton sets follows from the arguments given above. Therefore, $SEC(G)=n$ for every graph belonging to $\mathcal{F}_1\cup\mathcal{F}_2\cup\mathcal{F}_3\cup\mathcal{F}_4$.
\end{proof}
\begin{theorem}
Let $G$ be a disconnected graph of order $n$. Then $SEC(G)=n$
if and only if $G\cong K_p\cup K_q$, where $p+q=n$.
\end{theorem}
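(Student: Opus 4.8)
The plan is to reduce the whole statement to a description of the two-element secure dominating sets of a disconnected graph. First I would observe that a disconnected graph $G$ has no vertex of degree $n-1$, so no singleton $\{v_i\}$ is a dominating set, and hence none is a secure dominating set. Consequently $SEC(G)=n$ holds if and only if the singleton partition $\{\{v_1\},\dots,\{v_n\}\}$ is a $sec$-partition, which by definition holds if and only if for every vertex $v_i$ there exists a vertex $v_j$ with $\{v_i,v_j\}$ a secure dominating set (the ``neither is secure dominating'' requirement of a secure coalition being automatic). This recasts the theorem as a purely structural question about two-element secure dominating sets.

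For the forward direction I would start from a single secure dominating pair $S=\{a,b\}$, which must exist once $SEC(G)=n$. Since each vertex dominates only within its own component, no vertex can lie in a component avoided by both $a$ and $b$; as $G$ is disconnected this forces $G$ to have exactly two components $C_1\ni a$ and $C_2\ni b$. Every vertex of $C_1\setminus\{a\}$ is then dominated only by $a$ (it is not adjacent to $b$), so $N[a]=C_1$, and symmetrically $N[b]=C_2$. Next I would invoke security: for $u\in C_1\setminus\{a\}$ the vertex $a$ is the unique neighbor of $u$ in $S$, so $(S\setminus\{a\})\cup\{u\}=\{u,b\}$ must dominate $G$; since $b$ covers only $C_2$, this forces $N[u]=C_1$, i.e. $u$ is full in its component. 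Ranging over all $u\in C_1$ and symmetrically over all $u\in C_2$ shows both components are complete, giving $G\cong K_p\cup K_q$.

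The converse is a direct verification. Writing $G\cong K_p\cup K_q$ with components $C_1,C_2$, I would show that for any $v_i\in C_1$ and $v_j\in C_2$ the pair $\{v_i,v_j\}$ is a secure dominating set: it dominates because completeness makes $v_i$ full in $C_1$ and $v_j$ full in $C_2$, and it is secure because for $u\in C_1\setminus\{v_i\}$ the swap $\{u,v_j\}$ again consists of a vertex full in $C_1$ (completeness) and a vertex full in $C_2$, hence dominates, and symmetrically for $u\in C_2$. Since $G$ has no full vertex, no singleton is a secure dominating set, so every singleton forms a secure coalition with a singleton in the opposite component, the singleton partition is a $sec$-partition, and $SEC(G)=n$.

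I expect the main obstacle to be the bookkeeping in the security step of the forward direction: carefully arguing that the failure of the swapped set $\{u,b\}$ to dominate is exactly equivalent to $u$ not being full in $C_1$, and checking that the degenerate case of a size-one component (an isolated vertex) is consistent, since there the defender and fullness conditions become vacuous but still yield $K_1\cup K_q$.
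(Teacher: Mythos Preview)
Your proposal is correct and follows the same overall scaffold as the paper: reduce $SEC(G)=n$ to the singleton partition being a $sec$-partition, then analyse two-element secure dominating sets in a disconnected graph. The ``exactly two components'' step and the converse verification match the paper almost verbatim (the paper in fact just writes ``Converse is straightforward'').

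There is one genuine difference in the forward direction worth noting. The paper, after establishing two components, argues by contradiction for each component separately: if a component $\Gamma_1$ is not complete, pick a vertex $x\in\Gamma_1$ that fails to dominate all of $\Gamma_1$, and observe that $\{x\}\cup\{v_j\}$ can never be a dominating set for any $v_j$, contradicting that $\{x\}$ must have a coalition partner. This uses the full strength of ``every singleton has a partner''. Your argument instead fixes one secure dominating pair $\{a,b\}$ and exploits the \emph{security} of that single pair: the swap $(S\setminus\{a\})\cup\{u\}=\{u,b\}$ must dominate, which forces $N[u]=C_1$ for every $u\in C_1$, and symmetrically for $C_2$. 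So you extract the completeness of both components from the existence of a single two-element secure dominating set, which is formally a stronger intermediate statement (a disconnected graph with $\gamma_s(G)\le 2$ is already a disjoint union of two cliques). The paper's route is perhaps more direct, while yours makes sharper use of the security condition; both are short and either would be acceptable.
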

\begin{proof}
Suppose that $G$ is a disconnected graph with $SEC(G)=n$.\\ Then
$\pi=\bigl\{\{v_1\},\{v_2\},\ldots,\{v_n\}\bigr\}$
is a secure coalition partition of $G$.\\[2mm]
\textbf{Claim 1.} $G$ has exactly two connected components.\\
Suppose, to the contrary, that $G$ has at least three connected components. Let $u$ and $v$ be any two vertices of $G$. Since $u$ and $v$ belong to at most two components, the set $\left\{u,v\right\}$ cannot dominate the vertices of a third component. Hence, $\left\{u,v\right\}$ is not a dominating set of $G$. Consequently, no singleton set can form a secure coalition with any singleton set of $\pi$, contradicting the assumption that $\pi$ is a secure coalition partition. Therefore, $G$ has at most two components. Since $G$ is disconnected, it must have exactly two connected components.

\medskip

\textbf{Claim 2.} Each component of $G$ is a complete graph.\\
Let $\Gamma_1$ and $\Gamma_2$ be the two components of $G$. Suppose that $\Gamma_1$ is not a complete graph. Then there exists a vertex $x\in V(\Gamma_1)$ that is nonadjacent to some vertex of $\Gamma_1$.

 The singleton set $\{x\}$ cannot form a secure coalition with any singleton set corresponding to a vertex of $\Gamma_2$, as it is non- adjacent to some vertices of $\Gamma_1$. Moreover, if $y\in V(\Gamma_1)$, then the set $\left\{x,y\right\}$ fails to dominate the vertices of $\Gamma_2$. Thus, $\left\{x\right\}$ does not form a secure coalition with any singleton set of $\pi$, contradicting the fact that $\pi$ is a secure coalition partition.

Hence, $\Gamma_1$ must be complete. By the same argument, $\Gamma_2$ must also be complete. Therefore, $G\cong K_p\cup K_q$, where $p+q=n$.

Conversely, let $G\cong K_p\cup K_q$. Then every singleton set corresponding to a vertex of $K_p$ forms a secure coalition with every singleton set corresponding to a vertex of $K_q$, since the union of any such pair is a secure dominating set of $G$. Hence,
$\bigl\{\{v_1\},\{v_2\},\ldots,\{v_n\}\bigr\}$
is a secure coalition partition of $G$, and therefore $SEC(G)=n$.
\end{proof}

As a consequence of the above theorem, we obtain the following characterization of graphs with minimum degree zero and secure coalition number equal to their order.

\begin{corollary}
Let $G$ be a graph of order $n$ with $\delta(G)=0$. Then
$SEC(G)=n$ if and only if $G\cong K_1\cup K_{n-1}$.
\end{corollary}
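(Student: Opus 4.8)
The plan is to derive this directly from the preceding theorem characterizing disconnected graphs with $SEC(G)=n$, using the simple observation that an isolated vertex is precisely a $K_1$ component. First I would note that for $n\ge 2$ the hypothesis $\delta(G)=0$ guarantees that $G$ contains an isolated vertex, so $G$ is disconnected; the degenerate case $n=1$ can be dismissed separately, since there $G\cong K_1$ with $SEC(K_1)=1=n$ and $K_1\cup K_{n-1}$ collapses to $K_1$.

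For the forward direction, assume $SEC(G)=n$. Since $G$ is disconnected, the preceding theorem applies and yields $G\cong K_p\cup K_q$ with $p+q=n$. Now I would invoke $\delta(G)=0$: in a complete component $K_p$ every vertex has degree $p-1$, so a vertex of degree $0$ can only lie in a component with $p=1$. Hence at least one of the two components is $K_1$, and naming the other $K_{n-1}$ gives $G\cong K_1\cup K_{n-1}$.

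For the converse, if $G\cong K_1\cup K_{n-1}$ then $G$ is a disconnected graph of the form $K_p\cup K_q$ (with $p=1$ and $q=n-1$), so the same theorem immediately gives $SEC(G)=n$; moreover the $K_1$ component forces $\delta(G)=0$, placing $G$ in the stated class.

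I do not anticipate a genuine obstacle, as the statement is an immediate specialization of the disconnected characterization. The only points requiring care are matching the analytic condition $\delta(G)=0$ to the structural condition of possessing a $K_1$ component, and using the fact that the theorem forces \emph{exactly} two complete components, so that no third isolated vertex can occur; this is what rules out configurations such as $\overline{K_n}$ for $n\ge 3$, which have $\delta=0$ but do not attain $SEC(G)=n$.
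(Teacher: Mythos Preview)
Your proposal is correct and is exactly the argument the paper intends: the corollary is stated without proof as an immediate consequence of the preceding theorem on disconnected graphs, and you have simply spelled out that derivation, matching $\delta(G)=0$ to the presence of a $K_1$ component. The only cosmetic point is that the $n=1$ edge case is a bit awkward (since $K_1\cup K_0$ is not really a graph of the form $K_p\cup K_q$ in the theorem's sense), but your separate handling of it is fine.
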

\begin{proof}
If $\delta(G)=0$ and $SEC(G)=n$, then by the theorem,
$G\cong K_p\cup K_q$. Since $\delta(G)=0$, one of the components must be an isolated vertex. Hence, $G\cong K_1\cup K_{n-1}$.

Conversely, if $G\cong K_1\cup K_{n-1}$, then one can easily verify that, $SEC(G)=n$.
\end{proof}
\section{Secure coalition in trees}
\begin{corollary}\label{nsecnumber1}
For any tree $T$ of order $n$, $SEC(T)=n$ if and only if $T\cong P_n$, where $n\le 4$.
\end{corollary}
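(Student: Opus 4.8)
The plan is to lean on the structural characterization in Theorem~\ref{nsecnumber} together with the acyclicity of trees, reducing the statement to a short finite check. The degenerate case $n=1$ is immediate, since $T\cong K_1=P_1$ and $SEC(K_1)=1$ by Corollary~\ref{12cor}. For $n\ge 2$ I would use that every tree has a leaf, so $\delta(T)=1$; choosing such a leaf $v$ as the minimum-degree vertex gives $P=N(v)$ with $|P|=1$ and $Q=V(T)\setminus N[v]$ with $|Q|=n-2$.

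For the forward implication I would assume $SEC(T)=n$. As $T$ is connected, Theorem~\ref{nsecnumber} places $T$ in one of $\mathcal{F}_1,\dots,\mathcal{F}_4$, each built on the standing requirement that $G[Q]$ be a complete subgraph. Since a tree has no triangle, every clique in it has at most two vertices, so $|Q|=n-2\le 2$ and hence $n\le 4$. This is the decisive step collapsing the problem to finitely many graphs. One may note further that $|P|=1$ already forces $G[P]$ to be complete, so the only family $T$ could occupy is $\mathcal{F}_4$, whose hypotheses again require $G[Q]$ complete.

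It then remains to inspect the trees of orders $2,3,4$. For $n=2,3$ the only trees are $P_2$ and $P_3$, with $SEC(P_2)=2$ by Corollary~\ref{12cor} and $SEC(P_3)=3$ as recorded after the bound $SEC(G)\le n-\gamma_s(G)+2$. For $n=4$ the trees are $P_4$ and the star $K_{1,3}$. Taking $v$ to be a leaf of $K_{1,3}$, the set $Q$ consists of the two remaining leaves, which are non-adjacent, so $G[Q]$ is not complete; thus $K_{1,3}$ lies in no family and $SEC(K_{1,3})<4$ by Theorem~\ref{nsecnumber}. For $P_4$, a leaf $v$ gives $|P|=1$ and the neighbour of $v$ is adjacent to a vertex of the edge $Q\cong K_2$, so $P_4\in\mathcal{F}_4$ and $SEC(P_4)=4$. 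Hence $SEC(T)=n$ holds exactly for $P_n$ with $n\le 4$, which also settles the converse.

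The main obstacle is conceptual rather than computational: one must correctly extract the single decisive constraint that $G[Q]$ is complete from the family definitions and combine it with $\delta(T)=1$ (so $|Q|=n-2$) to cap the order at four. The only remaining delicacy is separating $P_4$ from $K_{1,3}$, which is resolved by observing that the star's two spare leaves are non-adjacent.
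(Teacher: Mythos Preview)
Your proof is correct and follows the same route as the paper: invoke Theorem~\ref{nsecnumber} to rule out trees of order at least five, then handle the small cases directly. In fact you make explicit the step the paper leaves implicit, namely that the standing requirement ``$G[Q]$ is complete'' combined with $\delta(T)=1$ (so $|Q|=n-2$) and triangle-freeness of trees forces $n\le 4$; the paper simply asserts that Theorem~\ref{nsecnumber} excludes $n\ge 5$ without spelling this out.
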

\begin{proof}
By Theorem~\ref{nsecnumber}, there exists no tree of order $n\ge 5$ which satisfies the condition $SEC(T)=n$. For trees of order $n=1,2,3$, the result is trivial. Now consider a tree of order $4$. Then either $T\cong S_4$ or $T\cong P_4$. Further, $SEC(S_4)=3\neq n$ and $SEC(P_4)=4=n$. Thus, $P_4$ is the only tree that satisfies the condition $SEC(T)=n$.
\end{proof}
\begin{theorem}\label{treen-2}
Let $T$ be a tree of order $n\ge 5$. If $T\not\cong P_5$, then
$SEC(T)\le n-2$.
\end{theorem}
\begin{proof}
By Corollary~\ref{nsecnumber1}, we have $SEC(T)<n$ for every tree $T$ of order $n\ge 5$. Therefore, it suffices to show that, for $n\ge 5$ and $T\not\cong P_5$, we have $SEC(T)\neq n-1$.

Suppose, to the contrary, that $SEC(T)=n-1$. Let $\pi$ be a secure coalition partition of $T$ with $|\pi|=n-1$. Then exactly one set of $\pi$ has cardinality $2$, say $A$, while remaining are singleton sets.

Let $x\in V(T)$ be a pendant vertex such that $x\notin A$.

\textbf{Claim 1.} The singleton set $\{x\}$ cannot form a secure coalition with any singleton set of $\pi$.

Suppose, to the contrary, that $\{x\}$ forms a secure coalition with a singleton set $\{z\}\in\pi$. Then $S=\{x\}\cup\{z\}$
is a secure dominating set of $T$. Since $x$ is a pendant vertex, the vertex $z$ must dominate every vertex not dominated by $x$. Moreover, because $T$ is a tree, no two vertices of $N(z)$ are adjacent; otherwise, they would form a cycle together with $z$.
Now let $w\in N(z)\setminus N[x]$. Since $w$ is dominated by $z$, the set $(S\setminus\{z\})\cup\{w\} =\{x,w\}$
must be a dominating set of $T$. However, one can easily observe that, the set $\{x,w\}$ cannot dominate all vertices of $T$. This contradicts the assumption that $S$ is a secure dominating set. Therefore, $\{x\}$ cannot form a secure coalition with any singleton set of $\pi$.

Hence, the only possible secure coalition partner of $\{x\}$ is the set $A$. Since every tree has at least two pendant vertices, let $y\neq x$ be another pendant vertex of $T$. Then $A$ must contain either $y$ or the support vertex adjacent to $y$; otherwise, $A\cup\{x\}$ fails to dominate $y$ and therefore is not a dominating set.

We now consider the following cases.

\textbf{Case 1.} The vertex $y$ belongs to $A$.

Let $A=\{a,y\}$, where $a\neq y$. Since $A\cup\{x\}$ is a dominating set, the vertex $a$ must dominate all vertices that are not dominated by $x$ and $y$.

\textbf{Subcase 1.1.} The vertex $a$ is adjacent to neither $x$ nor $y$.

\textbf{Claim 1.1.1.} The vertex $a$ is not a pendant vertex.

Suppose, to the contrary, that $a$ is a pendant vertex. Since $\{x\}\cup A=\{x,a,y\}$ is a dominating set, it follows that $T$ must be isomorphic to one of the graphs $G_1$ or $G_2$ shown in Fig.~\ref{figseccoalition}.

\begin{figure}[H]
\centering
\includegraphics[width=0.75\textwidth]{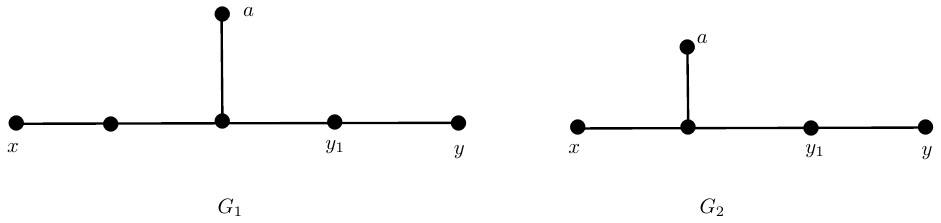}
\caption{Graphs $G_1$ and $G_2$.}
\label{figseccoalition}
\end{figure}

In both $G_1$ and $G_2$, the singleton set corresponding to the support vertex $y_1$ of $y$ cannot form a secure coalition with any part of $\pi$. This contradicts the assumption that $\pi$ is a secure coalition partition. Hence, $a$ is not a pendant vertex.

Furthermore, since $T\not\cong P_5$, every vertex adjacent to $a$ that does not belong to $N(x)\cup N(y)$ must be a pendant vertex. Let $a_1$ be such a pendant vertex adjacent to $a$.

\textbf{Claim 1.1.2.} The singleton set $\left\{a_1\right\}$ does not form a secure coalition with any set of $\pi$.

Since $a_1$ is a pendant vertex, an argument identical to that used in Claim~1 shows that $\left\{a_1\right\}$ cannot form a secure coalition with any singleton set of $\pi$. Therefore, its only possible secure coalition partner is the set $A=\left\{a,y\right\}$.

However, $S=\{a_1\}\cup A=\left\{a_1,a,y\right\}$
is not a dominating set, since none of its vertices dominates the pendant vertex $x$. This contradiction shows that $\left\{a_1\right\}$ cannot form a secure coalition with any set of $\pi$. Hence, the claim follows.

\textbf{Subcase 1.2.} The vertex $a$ is adjacent to exactly one of $x$ and $y$.

\textbf{Claim 1.2.1.} The singleton set corresponding to the support vertex $y_1$ of $y$ cannot form a secure coalition with any set of $\pi$.

First suppose that $a\sim y$ and $a\not\sim x$. Then $a=y_1$. Since $n\ge 5$, the vertex $y_1$ is adjacent to a pendant vertex other than $y$, say $a_2$. By Claim~1, the singleton set $\left\{a_2\right\}$ cannot form a secure coalition with any singleton set of $\pi$. Moreover, its only possible coalition partner is $A=\left\{y,y_1\right\}$, but $\left\{a_2,y,y_1\right\}$ is not a secure dominating set. A contradiction.

Now suppose that $a\sim x$ and $a\not\sim y$. Consider the singleton set $\left\{y_1\right\}$. Since neither $y_1$ nor any vertex other than $a$ dominates both pendant vertices $x$ and $a_2$, the set $\left\{y_1\right\}$ cannot form a secure coalition with any singleton set of $\pi$. Furthermore,
$\left\{y_1\right\}\cup A=\left\{y_1,a,y\right\}$ is not a secure dominating set. Hence, $\left\{y_1\right\}$ does not form a secure coalition with any set of $\pi$, a contradiction.

Therefore, in either case, the singleton set corresponding to the support vertex $y_1$ of $y$ cannot form a secure coalition with any part of $\pi$. This proves the claim.

\textbf{Subcase 1.3.} The vertex $a$ is adjacent to both $x$ and $y$.

Since $x$ and $y$ are pendant vertices and every neighbor of $a$ other than those in $N(x)\cup N(y)$ is also a pendant vertex, it follows that $T\cong S_n$.
Observe that $\left\{x\right\}\cup A=\left\{x,a,y\right\}$
is a dominating set of $T$. Since $n\ge 5$, there exists a pendant vertex $a_1\neq x,y$ adjacent to $a$. As $a_1$ is dominated only by $a$, replacing $a$ with $a_1$ yields
$(\left\{x,a,y\right\}\setminus\left\{a\right\})\cup\left\{a_1\right\}=\left\{x,y,a_1\right\}$, which is not a dominating set of $T$. Therefore, $\left\{x,a,y\right\}$ is not a secure dominating set. Consequently, $\left\{x\right\}$ and $A$ do not form a secure coalition, a contradiction.

\textbf{Case 2.} The support vertex $y_1$ of $y$ belongs to $A$, and there exists a vertex $a\neq y$ such that $A=\{y_1,a\}$.

\textbf{Subcase 2.1.} The vertex $x$ is adjacent to $y_1$.

\textbf{Claim 2.1.1.} Every vertex adjacent to $y_1$ is a pendant vertex.

Suppose, to the contrary, that there exists a vertex $y_1'\in N(y_1)$ with $d(y_1')\ge 2$. Since $T$ is a tree, $y_1'$ is adjacent to a pendant vertex, say $y_2'$.

First assume that $a=y_1'$. Then the singleton set $\{y_2'\}$ cannot form a secure coalition with any singleton set of $\pi$. Moreover,
$\left\{y_2'\right\}\cup\left\{y_1,y_1'\right\}$
is a dominating set but not a secure dominating set, yielding a contradiction.

Next assume that $a=y_2'$. By a similar argument,
$\left\{y_1'\right\}\cup\left\{y_1,y_2'\right\}$
is a dominating set but not a secure dominating set, again a contradiction.

Finally, suppose that $a\notin\left\{y_1',y_2'\right\}$. Since $a$ cannot be adjacent to both $x$ and $y$, it follows that
$\left\{y_1'\right\}\cup\left\{y_1,a\right\}$ is not a secure dominating set, a contradiction. Hence, every neighbor of $y_1$ is a pendant vertex. Consequently, $T$ is a star, that is,
$T\cong S_n$. However, for $n\ge 5$,
$SEC(S_n)=3<n-1$, contradicting the assumption that $SEC(T)=n-1$. 

\textbf{Subcase 2.2.} The vertex $x$ is not adjacent to $y_1$.

\textbf{Claim 2.2.1.} The vertex $a$ is adjacent to $x$.

Suppose, to the contrary, that $a\not\sim x$. Then the singleton set $\{y\}$ cannot form a secure coalition with any singleton set of $\pi$. Moreover, $\{y\}\cup\{a,y_1\}$
fails to be a dominating set. This contradiction proves that $a\sim x$.

\textbf{Claim 2.2.2.} The vertex $y_1$ is not adjacent to any pendant vertex other than $y$.

Suppose that $y_1$ is adjacent to a pendant vertex $y_3\neq y$. Then $\{x\}\cup\{y_1,a\}$
is not a secure dominating set, contradicting the fact that $\{x\}$ and $A=\{y_1,a\}$ must form a secure coalition. Hence, $y_1$ is not adjacent to any pendant vertex other than $y$.

\textbf{Claim 2.2.3.} The vertex $a$ is not adjacent to any pendant vertex other than $x$.

Suppose that $a$ is adjacent to a pendant vertex $x_2\neq x$. Then the singleton set $\{y\}$ cannot form a secure coalition with any set of $\pi$, a contradiction. Therefore, $a$ is not adjacent to any pendant vertex other than $x$.

\textbf{Claim 2.2.4.} The vertices $y_1$ and $a$ are adjacent.

Suppose that $y_1\not\sim a$. Then the singleton set $\{y\}$ cannot form a secure coalition with any set of $\pi$, contradicting the assumption that $\pi$ is a secure coalition partition. Hence, $y_1\sim a$.

Furthermore, one can verify that if either $a$ or $y_1$ is adjacent to a non-pendant vertex, then the resulting tree cannot satisfy $SEC(T)=n-1$. Therefore, this subcase also leads to a contradiction.\\
Furthermore, Claims~2.2.1--2.2.4 imply that every neighbor of $a$ other than $x$ and every neighbor of $y_1$ other than $y$ is a pendant vertex. Consequently, the resulting structure does not admit a secure coalition partition of cardinality $n-1$, contradicting the assumption that $SEC(T)=n-1$.\\[2mm]
We now consider the cases in which $x\in A$.

\textbf{Case 3.} The vertex $y\notin A$.

This case is analogous to Case~1. Indeed, by interchanging the roles of $x$ and $y$ in the arguments of Case~1, we obtain the  contradiction. Hence, this case cannot occur.

\textbf{Case 4.} The vertex $y\in A$.

Since $n\ge 5$ and $T\not\cong P_5$, there exists a vertex $z\in V(T)$ that is not dominated by either $x$ or $y$. Consequently, every pendant vertex of $T$ distinct from $x$ and $y$ must be dominated by $z$; otherwise, \{x,y,z\} would fail to be a dominating set, and the singleton set $\{z\}$ would not be able to form a secure coalition with any singleton part of $\pi$.

Let $x_1$ and $y_1$ denote the support vertices of $x$ and $y$, respectively. Since $\{x,y,z\}$ must be a dominating set, the vertex $z$ must be adjacent to both $x_1$ and $y_1$. Indeed, if $z\not\sim y_1$, then $\{x,y\}\cup\{y_1\}$ is not a dominating set. A similar argument applies to $x_1$.

Observe that $x_1$ and $y_1$ can dominate only the vertices $x$, $y$, and $z$. If either $x_1$ or $y_1$ dominates an additional vertex $w$, then $\{x,y\}\cup\{w\}$ is not a dominating set, and the singleton set $\{w\}$ cannot form a secure coalition with any singleton part of $\pi$. Therefore, $d(x_1)=d(y_1)=2$. It follows that every vertex in $V(T)\setminus\{x,y,x_1,y_1\}$
must be dominated by $z$. Since $T\not\cong P_5$, the vertex $z$ has a neighbor other than $x_1$ and $y_1$. One can now verify that neither $\{x_1\}$ nor $\{y_1\}$ can form a secure coalition with any set of $\pi$, contradicting the assumption that $\pi$ is a secure coalition partition.

Therefore, for every tree $T$ of order $n\ge 5$ with $T\not\cong P_5$, we have $SEC(T)\neq n-1$. Thus, it follows that
$SEC(T)\le n-2$.
\end{proof}
As an immediate consequence of Theorem~\ref{treen-2} and Corollary~\ref{nsecnumber1}, we obtain the following characterization.

\begin{corollary}
For a tree $T$ of order $n$,
$SEC(T)=n-1$ if and only if $T\cong S_4$ or $T\cong P_5$.
\end{corollary}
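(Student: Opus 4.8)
The plan is to combine the upper bound of Theorem~\ref{treen-2} with the exact classification of trees attaining $SEC(T)=n$ in Corollary~\ref{nsecnumber1}, and then to settle the two remaining extremal trees by hand. First I would split the argument by order. For $n\ge 5$ with $T\not\cong P_5$, Theorem~\ref{treen-2} gives $SEC(T)\le n-2$, so no such tree satisfies $SEC(T)=n-1$; hence the only tree of order $n\ge 5$ that could possibly attain $SEC(T)=n-1$ is $P_5$. Since $SEC(T)=n-1\ge 1$ forces $n\ge 2$, the remaining trees have order $2,3$, or $4$, namely $P_2,P_3,P_4$ and the star $S_4$. By Corollary~\ref{nsecnumber1} each path $P_n$ with $n\le 4$ satisfies $SEC(P_n)=n$, so none of these paths gives $SEC=n-1$, leaving $S_4$ as the sole small-order candidate. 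This reduces the forward direction to showing that exactly $S_4$ and $P_5$ survive.

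For the backward direction (which also closes the two surviving cases) I would verify directly that $SEC(S_4)=3$ and $SEC(P_5)=4$. The upper bounds are immediate from Corollary~\ref{nsecnumber1}: since $S_4\not\cong P_n$ and $P_5$ has order $5>4$, we get $SEC(S_4)\le 3$ and $SEC(P_5)\le 4$. For the matching lower bounds I would exhibit explicit $sec$-partitions. For $S_4$ with center $c$ and leaves $\ell_1,\ell_2,\ell_3$, the partition $\pi=\{\{\ell_1,\ell_2\},\{\ell_3\},\{c\}\}$ works: no part is a secure dominating set (the only full-degree vertex $c$ fails to securely dominate, since swapping $c$ for a single leaf leaves the other two leaves undominated), while both $\{c\}\cup\{\ell_1,\ell_2\}$ and $\{\ell_3\}\cup\{\ell_1,\ell_2\}$ are secure dominating sets, so every part has a coalition partner. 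For $P_5=(v_1,\dots,v_5)$, the partition $\pi=\{\{v_2,v_4\},\{v_1\},\{v_3\},\{v_5\}\}$ works: because $\gamma_s(P_5)=3$, every part is too small to be a secure dominating set, and since the union of two singletons has size $2<3$ it can never be secure dominating, so each singleton must pair with $\{v_2,v_4\}$; one then checks that each of $\{v_2,v_4\}\cup\{v_1\}$, $\{v_2,v_4\}\cup\{v_3\}$, and $\{v_2,v_4\}\cup\{v_5\}$ is a secure dominating set of size $3$.

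Combining the two directions yields $SEC(T)=n-1$ if and only if $T\cong S_4$ or $T\cong P_5$. The only genuinely computational step is verifying that the two displayed partitions are indeed $sec$-partitions, i.e. checking the secure-domination condition (the defending swap) for each size-$3$ union; this is the step I expect to require the most care, but for these two small fixed trees it is a finite, routine verification.
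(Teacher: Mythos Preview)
Your proposal is correct and follows essentially the same approach as the paper, which simply states the corollary as a direct consequence of Theorem~\ref{treen-2} and Corollary~\ref{nsecnumber1}. You are in fact more thorough than the paper: you explicitly exhibit $sec$-partitions witnessing $SEC(S_4)\ge 3$ and $SEC(P_5)\ge 4$, whereas the paper asserts $SEC(S_4)=3$ in passing (inside the proof of Corollary~\ref{nsecnumber1}) and never explicitly computes $SEC(P_5)$.
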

\section{Secure coalition graphs}
    A graph which is naturally associated with a $sec$-partition $\pi$ of a graph $G$ defined as follows.
\begin{definition}
    Let $G$ be a graph with $sec$-partition $\pi=\left\{V_1,V_2,\dots,V_k\right\}$. The secure coalition graph $SCG(G,\pi)$ of $G$ is the graph with vertex set $V_1,V_2,\dots,V_k$ and two vertices $V_i$ and $V_j$ are adjacent in $SCG(G,\pi)$ if and only if the sets $V_i$ and $V_j$ are secure coalition partners in $\pi$, that is, neither $V_i$ nor $V_j$ is a secure dominating set of $G$, but $V_i\cup V_j$ is a secure dominating set of $G$.\end{definition}
     The complete graph $K_n$ has exactly one $sec$-partition, namely, its singleton partition, for which $SCG(K_n,\pi)\cong \overline{K_n}$. The singleton coalition partition of cycle $C_5=(v_1,v_2,v_3,v_4,v_5,v_1)$ gives $SCG(C_5,\pi_1)\cong 2P_3$, where $2P_3$ is a disjoint union of two paths of order $3$. The $sec$-partition $\pi_2=\left\{\left\{v_1,v_5\right\},\left\{v_3\right\},\left\{v_2\right\},\left\{v_4\right\}\right\}$ of $C_5$ results in $SCG(C_5,\pi_2)\cong S_4+e$, where $S_4+e$ is the graph obtained by star on $4$ vertices by adding an edge between two non-adjacent pair of vertices. \par
    Since every graph $G$ has at least one $sec$-partition $\pi$, every graph has at least one associated secure coalition graph $H = SCG(G,\pi)$ and can have many associated secure coalition
graphs, depending on the number of $sec$-partitions that it has. In the next theorem, we show that, every graph $G$ with $\delta\ge 1$ is a secure coalition graph.
\begin{theorem}\label{spi}
For every graph $G$ with no isolated vertices, there is a graph $H$ and some $sec$-partition $\pi$ of $H$, such that $SCG(H,\pi)\cong G$.
\end{theorem}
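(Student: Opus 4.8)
The plan is to prove this realization result by a direct construction, in the same spirit as the coalition-graph realization theorem of Haynes et al.~\cite{haynes2023coalition} cited in the introduction, but adapted to the stronger secure-domination property. Given a target graph $G$ with $V(G)=\{v_1,\dots,v_n\}$ and $\delta(G)\ge 1$, I would build a host graph $H$ together with a distinguished partition $\pi=\{A_1,\dots,A_n\}$ of $V(H)$ into exactly $n$ parts, one part $A_i$ per vertex $v_i$, engineered so that two conditions hold: (R1) no single part $A_i$ is a secure dominating set of $H$; and (R2) for $i\neq j$, the union $A_i\cup A_j$ is a secure dominating set of $H$ if and only if $v_iv_j\in E(G)$. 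Once (R1) and (R2) are in place, $\pi$ is a $sec$-partition (each $A_i$ has a coalition partner because $\delta(G)\ge 1$ guarantees $v_i$ has a neighbor), and by the definition of the secure coalition graph the adjacencies of $SCG(H,\pi)$ are exactly the edges of $G$, so $SCG(H,\pi)\cong G$.

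For the construction I would let each part $A_i$ consist of a single center vertex $a_i$ together with the auxiliary gadget vertices assigned to it, and add three kinds of gadgets on top of the centers. First, a global base (for instance a clique or apex structure) attached to all centers so that ordinary domination of the bulk of $H$ is cheap and the genuine distinctions are forced to come from a few controlled private vertices; this isolates the secure (swap) condition rather than fighting plain domination everywhere at once. Second, for each edge $v_iv_j\in E(G)$, an edge-gadget whose vertices become secure-dominated precisely when both $a_i$ and $a_j$ are available to guard and defend them, so that $A_i\cup A_j$ clears the gadget. Third, for each non-edge, a blocker private vertex (typically a pendant, or a suitably attached vertex) that $A_i\cup A_j$ either fails to dominate or, more delicately, dominates but cannot defend: the key device is that defending a pendant forces its unique guard to vacate its center, and I would arrange that this vacancy undominates another private vertex, breaking the swap condition exactly for non-adjacent pairs. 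The same pendant/defense mechanism enforces (R1), since a lone part $A_i$ always leaves some private vertex undominated after any guard swap.

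The verification then splits into the two directions of (R2) together with (R1). For an edge I would check that $A_i\cup A_j$ dominates $H$ and that every external vertex admits a defending swap keeping the set dominating; the base makes most swaps automatic, and the edge-gadget is designed so the relevant swaps remain dominating. For a non-edge I would exhibit the specific private vertex left undominated (or undefendable) by $A_i\cup A_j$. I would also confirm that no unintended union $A_i\cup A_j$ with $v_iv_j\notin E(G)$ accidentally becomes secure dominating through the auxiliary vertices, and that no part is a singleton full-vertex secure dominator, which would produce an isolated vertex in $SCG(H,\pi)$ and contradict $\delta(G)\ge 1$.

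I expect the main obstacle to be exactly this control of the secure (defense) condition, which is global and fragile: unlike ordinary domination, the swap requirement couples the behavior at one private vertex to guard movements elsewhere, so a gadget inserted to fix one pair can silently repair or destroy the defendability of another pair. The crux is therefore to design the edge- and non-edge gadgets so that their secure-domination demands are genuinely local and mutually independent, so that the swap analysis for the pair $\{i,j\}$ sees only the gadgets attached to $v_i$ and $v_j$, and then to package the case analysis so that both directions of (R2) and the failure of (R1) follow uniformly rather than graph-by-graph.
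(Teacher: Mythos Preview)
Your high-level strategy is exactly right and matches the paper's: build $H$ with a partition $\pi=\{V_1,\dots,V_n\}$ indexed by $V(G)$, then verify (R1) and (R2). What you have, however, is still only a plan; no actual construction or verification is given, and the vague references to ``edge-gadgets'' and ``pendant blockers'' do not yet pin down a working $H$.

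The paper's construction is simpler than what you anticipate and sidesteps the global-coupling obstacle you flag. It uses no edge-gadgets at all. One starts with the clique $K_n$ on base vertices $v_1,\dots,v_n$; then for each $i$ adds a single witness $u_i$ adjacent to every base vertex except $v_i$, and places $u_i$ in $V_i$. These per-vertex witnesses, not per-edge gadgets, are what make $V_i\cup V_j$ securely dominating for \emph{every} pair: $v_i,v_j$ dominate everything, and when any base vertex swaps in for $v_j$, the witness $u_j$ (which is already in $V_j$) picks up the slack. To destroy unwanted coalitions one adds, for each non-edge $v_jv_k\in E(\overline G)$, a blocker $y_{jk}$ adjacent to every $v_\ell$ and $u_\ell$ with $\ell\notin\{j,k\}$, placed into some $V_\ell$ with $\ell\notin\{j,k\}$. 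The point is that $y_{jk}$ is not dominated by $V_j\cup V_k$ at all, so the non-edge case fails already at the level of ordinary domination; there is no delicate swap analysis to do, and the locality you were worried about comes for free. Your proposal would benefit from replacing the edge-gadget idea with this per-vertex witness device and from noting that blocking plain domination, rather than the defense condition, is enough on the non-edge side.
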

\begin{proof}
 Let $G$ be a graph with vertex set $
V(G) = \{v_1, v_2, \dots, v_n\}$,
$|V(G)|=n$. Let 
$E(G) = \{e_1, e_2, \dots, e_m\}$ and  
$E(\overline{G}) = \{q_1, q_2, \dots, q_{\overline{m}}\}$,
where $m + \overline{m} = \binom{n}{2}$.
We aim to construct a graph $H$ and a secure coalition partition $\pi$ of $H$ such that 
$SCG(H,\pi) \cong G$.\par
We begin the construction of $H$ with the complete graph $K_n$, whose vertices are labeled 
$\{v_1, v_2, \dots, v_n\}$, corresponding to the vertices of $G$. These vertices are referred to as the base vertices of $H$. 
The partition $\pi$ is initially taken as the singleton partition 
$\pi = \{V_1, V_2, \dots, V_n\}$, where each set contains exactly one base vertex, i.e., $V_i = \{v_i\}$ for $i = 1,2,\dots,n$. As the construction progresses, more vertices will be incorporated into $H$ and, subsequently, into the partition sets $V_i$.\par
For each vertex $v_i \in V(G)$, where $1 \le i \le n$, we extend the graph $H$ as follows.  
A new vertex $u_i$ is introduced and joined to all base vertices of $H$ except $v_i$.  
This new vertex $u_i$ is then included in the set $V_i$ of the partition $\pi$.\par
Next, for each edge $q_i = v_jv_k \in E(\overline{G})$, we proceed as follows.  
Introduce a new vertex, denoted $y_{jk}$, and join it to all base vertices of $H$ except $v_j$ and $v_k$.  
Furthermore, connect $y_{jk}$ to all vertices $u_i$ ($1\le i \le n)$ except $u_j$ and $u_k$.  
Finally, place $y_{jk}$ into any set of the partition $\pi$ other than $V_j$ or $V_k$.  Here it is important to note that, if $G$ has a full vertex, say $v_1$, and there are $\overline{m}$ vertices $y_{jk}$ ($1\le j \neq k\le \overline{m}$) corresponding to edges of $\overline{G}$, then not all of these $\overline{m}$ vertices can be placed in $V_1$.\par
It is important to note that none of the sets $V_i$, $1\le i\le n$, is a dominating set. Indeed, the vertices $u_i$ and $v_i$ fail to dominate the vertex corresponding to an edge of $\overline{G}$ incident with the vertex $i$.\\[2mm]
\textbf{Claim 1}. $SCG(H,\pi)\cong G$.\\[2mm]
\textbf{Claim 1.1}. If $v_jv_k \in E(G)$, then the set $S = V_j \cup V_k$ forms a secure coalition in $\pi$.\\[2mm]
Indeed, the base vertices $v_j$ and $v_k$ together dominate all vertices of $H$. Moreover, if $v_j$ is removed, the vertex $u_j \in S$ dominates all vertices not dominated by $v_k$ except $u_k$, and $u_k \in S$. Similarly, if $v_k$ is removed, the vertex $u_k \in S$ dominates all vertices not dominated by $v_j$ except $u_j$, and $u_j \in S$. Thus $V_j\cup V_k$ forms secure coalition in $\pi$. The  claim holds.\\[2mm]
\textbf{Claim 1.2}. If $v_jv_k \notin E(G)$, then the sets $V_j$ and $V_k$ do not form a secure coalition in $\pi$.\\[2mm]
 Since the vertex $y_{jk}\in H$ is not adjacent to $v_j$, $v_k$, $u_j$, and $u_k$, the set $V_j \cup V_k$ fails to be a dominating set. Thus $V_j\cup V_k$ do not form secure coalition in $\pi$. The claim holds.\par
 By Claim~1.1 and 1.2, the graph generated by $\pi$ is of order $n$. The vertices corresponding to the sets $V_j$ and $V_k$ of $\pi$ in $SCG(H,\pi)$ are adjacent if $v_jv_k\in E(G)$ and non-adjacent if $v_jv_k\not \in E(G)$. Thus, $SCG(H,\pi)\cong G$.
\end{proof}
Notice that for a given graph $G$ of order $n$ and size $m$, where $m+\overline{m}=\binom{n}{2}$, the graph $H$ constructed in Theorem~\ref{spi} has: order $n(H)=2n+\overline{m}$, size $m(H)=\binom{n}{2}+n(n-1)+2\overline{m}(n-2)$.
\begin{theorem}
If $G$ is a graph with $\delta(G)=0$ and has at least one edge, then there exist no graph $H$ and $sec$-partition $\pi$ of $H$ such that $SCG(H,\pi) \cong G$.
\end{theorem}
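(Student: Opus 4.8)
The plan is to argue by contradiction, exploiting the fact that an isolated vertex of a secure coalition graph must correspond to a very rigid type of partition block. I would suppose that there is a graph $H$ of order $N$ and a $sec$-partition $\pi=\{V_1,\dots,V_k\}$ of $H$ with $SCG(H,\pi)\cong G$, and then derive a contradiction. Since $\delta(G)=0$, the graph $G$, and hence $SCG(H,\pi)$, has an isolated vertex; let it correspond to the block $V_i\in\pi$. By the definition of $SCG(H,\pi)$, the block $V_i$ forms a secure coalition with no other block of $\pi$. Appealing to the definition of a $sec$-partition, a block that enters no secure coalition cannot be of the coalition-forming type, so $V_i$ must be of the other type: a secure dominating set consisting of a single vertex $w$ of degree $N-1$.

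The crucial step, which I expect to be the main obstacle, is to show that the mere existence of such a block forces $H\cong K_N$. Here I would use that $w$ is a full vertex, hence adjacent to every other vertex of $H$, so for any $u\in V(H)\setminus\{w\}$ the only vertex of $\{w\}$ lying in $N(u)$ is $w$ itself. Because $\{w\}$ is a secure dominating set, the swapped set $(\{w\}\setminus\{w\})\cup\{u\}=\{u\}$ must again be a dominating set of $H$, which forces $u$ to be adjacent to every other vertex, i.e.\ $u$ is itself a full vertex. Since $u$ is arbitrary, every vertex of $H$ has degree $N-1$, and therefore $H\cong K_N$. This is precisely where the secure-domination hypothesis (rather than ordinary domination) is indispensable: a plain full vertex already dominates $H$, but security additionally demands that every one-vertex swap remain dominating, and that is what collapses $H$ to the complete graph.

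Once $H\cong K_N$ is established, the contradiction is immediate. In $K_N$ every nonempty vertex set is a secure dominating set, since any nonempty set dominates $K_N$ and any single-vertex swap again yields a nonempty, hence dominating, set. Consequently no two blocks of $\pi$ can form a secure coalition, because a secure coalition requires each of its two blocks to fail to be a secure dominating set, whereas every block $V_j$ is nonempty and is therefore already a secure dominating set. Hence $SCG(H,\pi)$ would have no edges at all. But $G$ has at least one edge, contradicting $SCG(H,\pi)\cong G$. This contradiction shows that no such $H$ and $\pi$ exist, which proves the statement.
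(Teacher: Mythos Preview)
Your proof is correct and follows essentially the same approach as the paper: an isolated vertex of $SCG(H,\pi)$ forces the corresponding block to be a singleton secure dominating set $\{w\}$ with $w$ full, which in turn forces $H\cong K_N$, whence $SCG(H,\pi)$ is edgeless, contradicting the existence of an edge in $G$. You supply more detail than the paper does---in particular the swap argument showing that a full vertex being securely dominating collapses $H$ to $K_N$---but the logical skeleton is the same.
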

\begin{proof}
By the definition of a $sec$-partition, a set fails to form a secure coalition with any other set if and only if it is a secure dominating set consisting of a single vertex of degree $n-1$. This situation occurs only in a complete graph, where every vertex is a secure dominating set.\par Assume that $\delta(G) = 0$ and $G$ has at least one edge. Suppose, for contradiction, that there exists a graph $H$ with a $sec$-partition $\pi$ such that
$SCG(H,\pi) \cong G$. Since $\delta(G) = 0$, the graph $G$ has an isolated vertex. Let the corresponding part in $\pi$ be $V_k$. Then $V_k$ does not form a secure coalition with any other part of $\pi$. By the definition of a $sec$-partition, $V_k = \{v\}$, where $d(v)=n-1$. That is, $v$ is adjacent to every vertex of $H$. Now, since $G$ has at least one edge, there exist two adjacent vertices in $SCG(H,\pi)$. Hence, there exist parts $V_i, V_j \in \pi$ such that neither $V_i$ nor $V_j$ is a secure dominating set individually, but $V_i \cup V_j$ forms a secure dominating set. However, since $v$ is adjacent to every vertex of $H$, for any vertex $u \in V_i$, by the definition of secure domination each $\{u\}$ must be a secure dominating set in $H$, which contradicts the existence of parts $V_i$ and $V_j$ that are not secure dominating sets individually.
Therefore, no such graph $H$ exists, and hence there is no graph $H$ such that $SCG(H,\pi) \cong G$.
\end{proof}
    \section*{Conclusion}
    In this paper, we first prove that every graph admits a secure coalition partition. We then completely characterize the graphs for which $SEC(G) \in \{1,2,n\}$ and establish an upper bound on $SEC(G)$ in terms of the order of the graph and its secure domination number. Furthermore, we characterize all trees with $SEC(T) = n-1$. Finally, we demonstrate that every graph $G$ without isolated vertices is a secure coalition graph.
 
    \end{document}